\DeclareSymbolFont{bbold}{U}{bbold}{m}{n}
\DeclareSymbolFontAlphabet{\mathbbold}{bbold}
\newcommand{\tOmega}{{\Omega_0}}
\newcommand{\fcal}{\mathcal{F}}
\newcommand{\torus}{\mathbb{T}}
\newcommand{\bu}{\mathbf{u}}
\newcommand{\bff}{\mathbf{f}}
\DeclareMathOperator{\spec}{spec}
\DeclareMathOperator{\Id}{Id}
\newcommand{\HH}{\mathcal{H}}
\newtheorem*{thm}{Theorem}
\title{Periodic damping gives polynomial energy decay}
\author{Jared Wunsch}
\address{Department of Mathematics\\ Northwestern University\\2033 Sheridan Rd.\\ Evanston IL 60208\\USA}
\email{jwunsch@math.northwestern.edu}
\thanks{The author is grateful to Nicolas Burq for helpful discussions
leading to these results, as well as to Claude Zuily for his comments
on the manuscript and to Julien Royer and Satbir Malhi for pointing out errors in the
first version.  The paper also benefited from the helpful comments of
two anonymous referees.
Partial support was provided by NSF grant
DMS--1265568.  The author also gratefully acknowledges the hospitality
of the Institut Henri Poincar\'e, where this work was carried out.}
\date{\today}
\begin{document}

\maketitle
\begin{abstract}
Let $u$ solve the damped Klein--Gordon equation
$$
\big( \pa_t^2-\sum \pa_{x_j}^2 +m\Id +\gamma(x) \pa_t \big) u=0
$$
on $\RR^n$ with $m>0$ and $\gamma\geq 0$ bounded below on a $2 \pi
\ZZ^n$-invariant open set by a positive constant.  We show that the
energy of a solution decays at a polynomial rate.  This is proved via
a periodic observability estimate on $\RR^n.$
\end{abstract}

\section{Introduction}

Consider the damped Klein--Gordon equation on $[0,\infty) \times \RR^n:$
\begin{equation}\label{DWE}
\big( \pa_t^2-\sum \pa_{x_j}^2 +m\Id +\gamma(x) \pa_t \big) u=0,
\end{equation}
with $\gamma (x) \geq 0$ for all $x,$ and $m>0.$ Burq--Joly
\cite{BuJo:14} have recently proved that if there is \emph{uniform
  geometric control} in the sense that there exist $T, \ep>0$ such
that $\int_0^T \gamma(x(t))\, dt \geq \ep$ along every straight line
unit-speed trajectory, then $u$ enjoys exponential energy decay, thus
generalizing classic results of Bardos, Lebeau, Rauch, and Taylor
\cite{RaTa:74}, \cite{MR90h:35030}, \cite{BaLeRa:92} to a noncompact
setting.  By contrast, in the case of merely periodic $\gamma$ (or,
more generally, under the assumption that $\gamma$ is strictly
positive on a family of balls whose dilates cover $\RR^n$) then
Burq--Joly show that a logarithmic decay of energy still holds.

In this note, we show that in fact $u$ enjoys at least a \emph{polynomial} rate
of energy decay (with derivative loss) provided that $\gamma$ is nontrivial and periodic, or,
more generally, strictly positive on a periodic set:
\begin{theorem}\label{theorem:decay}
Assume that $m>0$ and $0 \leq \gamma \in L^\infty$ and that there exist
$\ep>0$ and a $2\pi \ZZ^n$-invariant open set $\Omega
\subset \RR^n$ such that $\gamma(x) \geq \ep$ for a.e.\ $x \in \Omega.$
Then there exists $C>0$ such that for $u$ solving \eqref{DWE},
$$
\norm{(u(t), u_t(t))}_{H^1\times L^2} \leq \frac {C}{\sqrt{1+t}}\norm{(u(0), u_t(0))}_{H^2\times H^1}.
$$
\end{theorem}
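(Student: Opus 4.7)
The plan is to convert the energy decay statement into a resolvent estimate for the semigroup generator, and then to prove that resolvent estimate by Floquet--Bloch reduction to the torus. First I would rewrite \eqref{DWE} as a first-order system $\pa_t U = AU$ on $\HH := H^1(\RR^n) \times L^2(\RR^n)$, equipped with the energy norm coming from $-\Delta + m\Id$. Since $\gamma \geq 0$ the operator $A$ is dissipative, and the strictly positive mass $m$ combined with a unique continuation argument applied to the periodic open damping set excludes eigenvalues on $i\RR$, so $i\RR \subset \rho(A)$. By the Borichev--Tomilov theorem, the desired $t^{-1/2}$ decay with one derivative of loss is equivalent to the resolvent bound
$$
\norm{(A - i\tau)^{-1}}_{\HH \to \HH} \leq C(1+\abs\tau)^2, \qquad \tau \in \RR.
$$

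To prove this resolvent bound, I would unwind $(A - i\tau)U = F$ into a scalar Helmholtz equation $(-\Delta + m - \tau^2 + i\tau\gamma)u = g$ and apply a standard multiplier argument (pair with $\bar u$, split into real and imaginary parts, absorb lower order terms using $m>0$). This reduces the resolvent bound to a \emph{periodic observability estimate on $\RR^n$} of schematic form
$$
\norm{u}_{L^2(\RR^n)} \leq C(1+\abs\tau)\norm{\mathbf{1}_\Omega u}_{L^2(\RR^n)} + \frac{C}{1+\abs\tau}\norm{(-\Delta + m - \tau^2)u}_{L^2(\RR^n)}.
$$

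To prove this observability estimate, I would exploit the $2\pi\ZZ^n$-invariance of $\Omega$ via the Floquet--Bloch transform
$$
u_\theta(x) := \sum_{k \in \ZZ^n} u(x + 2\pi k)\, e^{-i k \cdot \theta}, \qquad \theta \in [0,1)^n,
$$
which is a unitary equivalence $L^2(\RR^n) \cong L^2([0,1)^n; L^2_\theta(\torus^n))$ that intertwines $-\Delta$ with the direct integral of Bloch-twisted Laplacians $-\Delta_\theta$ on $\torus^n = \RR^n/2\pi\ZZ^n$, and under which multiplication by $\mathbf{1}_\Omega$ acts fiberwise. The estimate on $\RR^n$ thus reduces to the analogous estimate on $\torus^n$ for the Bloch-twisted Helmholtz operator with the same open observation region, uniformly in $\theta$.

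The principal obstacle is this uniform-in-$\theta$ torus estimate. For each fixed $\theta$ and large $\abs\tau$ one faces a semiclassical observability/resolvent problem on the compact torus with an open damping set that need not satisfy the geometric control condition. I would argue by contradiction with a sequence $(\tau_n, \theta_n, v_n)$ violating the bound; after extracting $\theta_n \to \theta_\infty$ by compactness of the Brillouin zone, the case $\abs{\tau_n}$ bounded follows by Fredholm theory and unique continuation on the connected torus, while the case $\abs{\tau_n} \to \infty$ requires a semiclassical defect-measure analysis. Such a measure is flow-invariant on $T^*\torus^n$, lives on the characteristic variety, and vanishes over $\Omega$; a torus-specific argument, exploiting the rational structure of closed geodesics in the spirit of existing work on observability on $\torus^n$, then forces the measure to vanish, yielding the contradiction. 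Continuous dependence of $-\Delta_\theta$ on $\theta$ and compactness of $[0,1)^n$ assemble these $\theta$-pointwise bounds into the required uniform estimate.
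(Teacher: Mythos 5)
Your overall architecture (first-order system, Borichev--Tomilov, reduction to a Helmholtz resolvent estimate, and Floquet--Bloch reduction of the periodic observability estimate to a family of Bloch-twisted Laplacians on $\torus^n$) coincides with the paper's. The divergence, and the genuine gap, is in how you propose to prove the uniform-in-$\theta$ observability estimate on the torus. You suggest a contradiction argument via semiclassical defect measures, asserting that a flow-invariant measure on the characteristic variety that vanishes over $\Omega$ must vanish. This is false as stated: since no geometric control is assumed, there exist closed geodesics of $\torus^n$ that never meet the projection of $\Omega$, and the invariant probability measure carried by such a geodesic is flow-invariant, lives on the characteristic set, vanishes over $\Omega$, and is nonzero. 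Ruling out such measures as limits of quasimodes is precisely the hard content of observability on the torus from an arbitrary open set (Jaffard via lacunary Fourier series for $n=2$, Komornik in higher dimensions, and the second-microlocal analyses of Burq--Zworski and Anantharaman--Maci\`a); it cannot be dispatched by ``a torus-specific argument exploiting the rational structure of closed geodesics'' without essentially reproving those theorems --- and, worse, reproving them for the entire family of twisted operators $(D_x-\alpha)^2$ with constants uniform in the Bloch parameter, which is not literally covered by the cited literature.

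The paper closes exactly this gap with an elementary trick your proposal is missing: the twisted Schr\"odinger propagator factors as $e^{itH_\alpha}=e^{it|\alpha|^2}\,\tau_{-2t\alpha}\,e^{itH_0}$, i.e.\ a phase times a translation by $-2t\alpha$ times the untwisted propagator. Choosing the observation time $T$ so small that these translations keep a slightly shrunken observation set inside $\Upsilon$ for all $t\in[0,T]$ and all $\alpha$ in a fundamental domain, the known Schr\"odinger observability for $H_0$ (Jaffard/Komornik, used as a black box) transfers verbatim to every $H_\alpha$ with a uniform constant, and Miller's equivalence converts this into the uniform resolvent estimate \eqref{resolventobservability}. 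Two secondary points: (i) absence of eigenvalues on $i\RR$ does not by itself give $i\RR\subset\rho(A)$, since $A$ does not have compact resolvent on $\RR^n$; you need the quantitative resolvent bound down to $\tau=0$, which the paper obtains for small $|\tau|$ from the pairing argument using $m>0$. (ii) Your schematic observability estimate claims a gain of $(1+|\tau|)^{-1}$ on the forcing term, which the paper neither proves nor needs; the spectrally uniform estimate with constant $C$ on both terms suffices once combined with the standard bound $\norm{\sqrt\gamma u}^2\leq C|\tau|^{-1}\norm{f}\norm{u}$.
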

Note that we do not require any hypothesis of geometric control.

We proceed by a standard route to this estimate by first proving an
\emph{observability estimate}, which then leads to a
\emph{resolvent estimate}.

  Let
$$
\Lap=-\sum \pa_{x_j}^2
$$
denote the nonnegative Laplace operator.
The
observability estimate (which may be of independent interest
owing to applications in control theory) is then as follows:
\begin{theorem}\label{theorem:obs}
Let $\Omega \subset \RR^n$ be a nonempty, open, $2 \pi \ZZ^n$-invariant
set.
For all $\lambda \in \RR$ we have the following estimate:
$$
(\Lap-\lambda) u=f \Longrightarrow \norm{u}_{L^2(\RR^n)} \leq C\big( \norm{f}_{L^2(\RR^n)}+\norm{u}_{L^2(\Omega)} \big)
$$
\emph{with $C$ independent of $\lambda$.}
\end{theorem}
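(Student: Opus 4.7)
The plan is to invoke the Bloch--Floquet decomposition to reduce the estimate on $\RR^n$ to a family of estimates on the torus $\TT^n = \RR^n / 2\pi \ZZ^n$, and then to treat each torus problem by separating spectrally near-resonant and off-resonant components. The Floquet transform provides a unitary equivalence $\mathcal{F}: L^2(\RR^n) \to L^2([0,1]^n; L^2(\TT^n))$ that diagonalizes $\Lap$ as a direct integral $\int^\oplus \Lap_\theta\, d\theta$ of the twisted Laplacians $\Lap_\theta := |D+\theta|^2$, each of which has pure point spectrum $\{|k+\theta|^2 : k \in \ZZ^n\}$ with eigenfunctions $e^{ik\cdot x}$. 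Because $\Omega$ is $2\pi \ZZ^n$-invariant, $\Omega = \pi^{-1}(\omega)$ for a nonempty open $\omega \subset \TT^n$; a Poisson-summation computation then shows that $\mathcal{F}$ simultaneously identifies $L^2(\Omega)$ with $L^2([0,1]^n; L^2(\omega))$. Integrating in $\theta$, the theorem reduces to a fiberwise bound
$$
\|v\|_{L^2(\TT^n)} \leq C\bigl(\|(\Lap_\theta - \lambda)v\|_{L^2(\TT^n)} + \|v\|_{L^2(\omega)}\bigr),
$$
with $C$ uniform in $\theta \in [0,1]^n$ and $\lambda \in \RR$.

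To prove the fiber estimate, I fix $\delta > 0$ and let $P$ denote the orthogonal projector onto the span of the exponentials $e^{ik\cdot x}$ with $\bigl||k+\theta|^2 - \lambda\bigr| < \delta$, and set $P^\perp = \Id - P$. On $\mathrm{range}(P^\perp)$ the operator $\Lap_\theta - \lambda$ is boundedly invertible with norm $\leq \delta^{-1}$, yielding $\|P^\perp v\|_{L^2(\TT^n)} \leq \delta^{-1}\|f\|_{L^2}$. Setting $w := Pv$, the triangle inequality $\|w\|_{L^2(\omega)} \leq \|v\|_{L^2(\omega)} + \delta^{-1}\|f\|_{L^2}$ then reduces everything to a uniform spectral-cluster observability on the torus: for every trigonometric polynomial $w = \sum_{k \in A} c_k e^{ik\cdot x}$ whose frequencies $A \subset \{k \in \ZZ^n : \bigl||k+\theta|^2-\lambda\bigr| < \delta\}$ lie in an annular shell, one must have $\|w\|_{L^2(\TT^n)} \leq C\|w\|_{L^2(\omega)}$ with $C$ independent of $\lambda$ and $\theta$.

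This cluster observability is the heart of the matter and the main expected obstacle. For bounded $\lambda$ it is routine: a compactness-contradiction argument produces a nontrivial torus eigenfunction vanishing on the open set $\omega$, which must be identically zero by real-analyticity, and compactness of the parameter space $[0,1]^n$ takes care of uniformity in $\theta$. The substantive regime is $\lambda \to \infty$, where clusters may involve arbitrarily many exponentials whose lattice-point frequencies accumulate in a thin shell around a sphere of growing radius and could conceivably localize away from $\omega$. I expect to handle this case by an Ingham--Jaffard-type inequality on $\TT^n$: observability for trigonometric sums with frequencies in annular subsets of $\ZZ^n$ is the classical setting in which the flat torus admits control from an arbitrary nonempty open set (Jaffard in dimension two, with higher-dimensional extensions due to Komornik, Burq--Zworski, and Anantharaman--Maci\'a), and such statements deliver precisely the required uniformity in $\lambda$ and $\theta$. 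Once the fiberwise estimate is in hand, integration in $\theta$ via the Bloch identification closes the proof.
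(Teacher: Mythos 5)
Your global strategy coincides with the paper's: the Floquet transform you describe is exactly the periodization $\Pi_\alpha g=\Pi(e^{i\alpha x}g)$ used there, your identification of $L^2(\Omega)$ with $L^2([0,1)^n;L^2(\omega))$ is Lemma~\ref{lemma:plancherel}, and the fiberwise bound you reduce to is precisely Proposition~\ref{proposition:observe}. The divergence --- and the gap --- is in how you propose to prove that fiber estimate. Your reduction to a spectral-cluster statement is sound as far as it goes: the off-resonant part is controlled by $\delta^{-1}\norm{f}$, and everything comes down to $\norm{w}_{L^2(\torus^n)}\leq C\norm{w}_{L^2(\omega)}$ for $w$ with frequencies $k$ satisfying $\smallabs{\,|k+\theta|^2-\lambda\,}<\delta$. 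But the statement you then need --- observability for trigonometric sums over the \emph{shifted} lattice $\ZZ^n+\theta$, with a constant uniform in $\lambda$ \emph{and in the shift} $\theta$ --- is not what Jaffard and Komornik prove; their results concern $\theta=0$. The uniformity in $\theta$ is exactly the point requiring an argument, and ``I expect to handle this case by an Ingham--Jaffard-type inequality\dots such statements deliver precisely the required uniformity'' is where the proof stops being a proof. (In dimension two one might extract uniformity in $\theta$ from Kahane-type Ingham inequalities valid for real frequencies, but in higher dimensions Komornik's argument is tied to the integer lattice, and you supply no substitute.) You correctly flag this as ``the heart of the matter,'' but you do not resolve it.

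The paper closes this gap without touching spectral clusters. It invokes Miller's equivalence between the resolvent estimate for $H_\alpha=(D_x-\alpha)^2$ and Schr\"odinger observability for $e^{itH_\alpha}$, and then exploits the exact factorization $e^{itH_\alpha}=e^{it\smallabs{\alpha}^2}\,\tau_{-2t\alpha}\,e^{itH_0}$: the twisted flow is the free flow composed with a translation by $2t\alpha$ and a harmless phase. Choosing the observation time $T$ so small that these translations move a slightly shrunken observation set $\omega$ only within $\Upsilon$, the known $\alpha=0$ observability transfers verbatim to every $\alpha\in[0,1)^n$ with a uniform constant, and Miller's theorem converts this back into the resolvent estimate. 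If you wish to keep your cluster route, you would need either an analogous transference argument from $\theta=0$ to general $\theta$ or a direct proof of the shifted-lattice cluster inequality; as written, the central uniformity claim is asserted rather than proved.
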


From the observability estimate, it is not difficult to obtain a
\emph{resolvent estimate} as follows:
\begin{theorem}\label{theorem:resolvent}
Let $\gamma$ be as in Theorem~\ref{theorem:decay}.  Then
\begin{equation}\label{quasimode}
(\Lap+m\Id +is \gamma (x)-s^2\Id) u=f \Longrightarrow \norm{u}_{L^2}
\lesssim C (1+\smallabs{s}) \norm{f}_{L^2}.
\end{equation}
\end{theorem}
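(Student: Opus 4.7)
The plan is to reduce to the observability estimate of Theorem~\ref{theorem:obs} by treating the damping term as part of the source. Rewriting the hypothesis as $(\Lap - (s^2-m))u = f - is\gamma u$ and applying Theorem~\ref{theorem:obs} with $\lambda = s^2 - m$ gives
\[
\|u\|_{L^2} \le C\bigl(\|f\|_{L^2} + |s|\,\|\gamma u\|_{L^2} + \|u\|_{L^2(\Omega)}\bigr).
\]
The remaining work is to control the two $u$-dependent right-hand-side terms.

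For this I would invoke the standard imaginary-part energy identity. Pairing the equation with $u$ in $L^2$ and taking imaginary parts annihilates the self-adjoint part $\Lap + m - s^2$ and leaves
\[
|s|\int \gamma|u|^2\,dx \le \|f\|_{L^2}\|u\|_{L^2}.
\]
Using $\gamma \ge \ep$ on $\Omega$ this yields $\|u\|_{L^2(\Omega)}^2 \le \ep^{-1}|s|^{-1}\|f\|_{L^2}\|u\|_{L^2}$, and the pointwise bound $\gamma^2 \le \|\gamma\|_\infty\gamma$ yields $\|\gamma u\|_{L^2}^2 \le \|\gamma\|_\infty|s|^{-1}\|f\|_{L^2}\|u\|_{L^2}$. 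Plugging these into the observability inequality and applying Young's inequality $\sqrt{AB} \le \eta A + B/(4\eta)$ absorbs the $\|u\|_{L^2}^{1/2}$ factors into the left-hand side at the cost of multiples of $|s|\|f\|_{L^2}$ and $|s|^{-1}\|f\|_{L^2}$. This already yields $\|u\|_{L^2} \lesssim (1+|s|)\|f\|_{L^2}$ whenever $|s|$ is bounded below by a fixed positive constant.

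To close the small-$|s|$ regime I would argue instead by direct ellipticity. Choosing $\delta>0$ with $\delta^2 \le m/2$ and $\delta\|\gamma\|_\infty \le m/4$, the self-adjoint operator $\Lap + m - s^2$ is positive and has $\|(\Lap+m-s^2)^{-1}\|_{L^2\to L^2} \le 2/m$; writing $u = (\Lap+m-s^2)^{-1}(f - is\gamma u)$ and absorbing the damping term on the right then gives the uniform bound $\|u\|_{L^2} \lesssim \|f\|_{L^2}$ on $|s| \le \delta$, which combined with the previous range completes the proof. The main obstacle is to obtain the sharp linear power $1+|s|$: the $|s|$-loss from the damping term and the $|s|^{-1/2}$-gain supplied by the energy identity must be matched against each other via Young's inequality so that the $\|u\|_{L^2}$ on the right is absorbed with constant strictly less than one, while the surviving $\|f\|_{L^2}$-term grows only linearly in $|s|$.
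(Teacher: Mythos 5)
Your proposal is correct and follows essentially the same route as the paper: treat the damping as a source term in the observability estimate of Theorem~\ref{theorem:obs}, control $\norm{\gamma u}$ and $\norm{u}_{L^2(\Omega)}$ via the imaginary-part pairing $\smallabs{s}\int\gamma\abs{u}^2 \leq \norm{f}\,\norm{u}$, and absorb the remaining $\norm{u}$ factors, with the small-$\smallabs{s}$ regime handled separately by positivity of $\Lap+m-s^2$ (the paper does this via the real-part pairing, which is the same estimate). The only cosmetic difference is that the paper first closes the estimate for $\norm{\sqrt{\gamma}u}^2 \leq C\norm{f}^2$ and then substitutes back, whereas you substitute directly and absorb via Young's inequality; both yield the same linear growth in $\smallabs{s}$.
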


The strategy will be to prove Theorem~\ref{theorem:obs} by reducing it
to known observability estimates on the torus;
this argument is the main novelty here.  This leads to
Theorem~\ref{theorem:resolvent} by standard arguments (given below).
The decay estimate Theorem~\ref{theorem:decay} then follows by a
functional-analytic argument due to Borichev--Tomilov \cite{BoTo:10}.

We remark that \emph{the decay rate obtained here is almost certainly
  not optimal} in the case of smooth damping.  The work of
Anantharaman-L\'eautaud \cite{AnLe:2014} shows that on $\torus^2$ one
can obtain better estimates for damping with better than $L^\infty$
smoothness by moving the damping to the left-hand side of the
estimates, treating it as part of the operator rather than as a term
to be estimated as an error.  The arguments used here make it
difficult to bring to bear the finer results of \cite{AnLe:2014} in
the periodic setting, as the incorporation of the damping term into
the operator one is trying to estimate makes the proof of
our Proposition~\ref{proposition:observe} fail badly.  To obtain a
stronger estimate, one would therefore have to follow the
second-microlocal arguments of \cite{AnLe:2014} directly rather than
simply using the resulting estimate on the torus.  As refined
estimates linked to the regularity of the damping term remain a
difficult subject of current research, we will not pursue such an
approach in this note.

In what follows, the constant $C$ will change from line to line, but
will always be independent of the spectral parameter.  As noted above,
$\Lap\geq 0$ denotes the nonnegative Laplacian, and we also denote
$D_{x_j}=i^{-1}\pa_{x_j},$ so that $\Lap=\sum D_{x_j}^2,$ or, abusing
notation slightly, $\Lap=D_x^2.$  With no subscript, the notation
$\norm{\bullet}$ denotes $L^2$ norm.  We use the notation for the
standard torus $\torus^n
\equiv \RR^n/2 \pi \ZZ^n.$

\section{Proofs of Main results}
\subsection{Twisted Laplacian}
We begin by establishing observability estimates on a bundle
Laplacian on the flat torus (cf.\ Lemma~2.4 of \cite{BuZw:12} for a
related estimate).

Let $\alpha \in \RR^n.$
Set \begin{equation}\label{Halpha} H_\alpha=(D_x
  -\alpha)^2.\end{equation}
Note that these operators are all self-adjoint with the same domain
independent of $\alpha.$

\begin{proposition}\label{proposition:observe}
Let $\Upsilon\subset \torus^n$ be open and nonempty.
For all $\alpha \in [0,1)^n,$
\begin{equation}\label{resolventobservability}
(H_\alpha-\lambda ) u=f\ \text{on } \torus^n 
\Longrightarrow
\norm{u}_{L^2(\torus^n)} \leq C {\norm{f}}_{L^2(\torus^n)} + C {\norm{u}_{L^2(\Upsilon)}},
\end{equation}
\emph{with constants independent of $\alpha$
  and  $\lambda\in \RR.$}
\end{proposition}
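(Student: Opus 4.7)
My plan is to prove \eqref{resolventobservability} by a contradiction argument leveraging (i) spectral rigidity of $H_\alpha$ on $\torus^n$ (its eigenfunctions are trigonometric polynomials), and (ii) the fact that the principal symbol $|\xi|^2$ of $H_\alpha$ is independent of $\alpha$, so that at high frequency the twisted problem reduces to the untwisted observability on $\torus^n$ known from \cite{BuZw:12}.

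Suppose \eqref{resolventobservability} fails, producing sequences $\alpha_k \in [0,1)^n$, $\lambda_k \in \RR$, and $u_k, f_k \in L^2(\torus^n)$ with $(H_{\alpha_k} - \lambda_k)u_k = f_k$, $\norm{u_k}=1$, and $\norm{f_k} + \norm{u_k}_{L^2(\Upsilon)} \to 0$. By compactness of $[0,1]^n$, pass to a subsequence so $\alpha_k \to \alpha_\infty$. If $(\lambda_k)$ is bounded, extract further so that $\lambda_k \to \lambda_\infty$; ellipticity of $H_{\alpha_k}$ gives a uniform $H^2$ bound on $u_k$, and Rellich then produces an $L^2$-limit $u_\infty$ along a subsequence, satisfying $\norm{u_\infty}=1$, $u_\infty|_\Upsilon \equiv 0$, and $(H_{\alpha_\infty} - \lambda_\infty)u_\infty = 0$. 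Expanding $u_\infty$ in the Fourier basis $\{e^{ik\cdot x}\}_{k \in \ZZ^n}$, the relation $H_{\alpha_\infty} e^{ik\cdot x} = |k-\alpha_\infty|^2 e^{ik\cdot x}$ forces the Fourier coefficients of $u_\infty$ to be supported on the finite set $\{k \in \ZZ^n : |k - \alpha_\infty|^2 = \lambda_\infty\}$. Hence $u_\infty$ is a nonzero trigonometric polynomial, thus real-analytic, and cannot vanish on an open set --- contradiction.

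The high-frequency case $\lambda_k \to \infty$ is the main obstacle. A direct perturbative reduction --- writing $H_{\alpha_k} - \lambda_k = (\Lap - \lambda_k) - 2\alpha_k \cdot D + |\alpha_k|^2$ and applying the untwisted observability for $\Lap$ --- fails, since the term $2\alpha_k \cdot Du_k$ has norm of size $|\alpha_k|\norm{Du_k} \sim |\alpha_k|\sqrt{\lambda_k}$ that blows up as $\lambda_k \to \infty$. The resolution is to rescale semiclassically: with $h_k = \lambda_k^{-1/2}$, the operator $h_k^2 H_{\alpha_k} = (h_k D - h_k \alpha_k)^2$ has full symbol $|\xi - h_k\alpha_k|^2$ which, since $h_k \alpha_k \to 0$, converges uniformly to $|\xi|^2$. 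Thus the $\alpha$-twist, although principal-order classically, becomes a subprincipal $O(h)$ perturbation on the semiclassical scale. Extracting a semiclassical defect measure $\mu$ of $(u_k)$ at scale $h_k$, the equation forces $\mu$ to be a probability measure on $T^*\torus^n$ supported on $\{|\xi|=1\}$ and invariant under the geodesic flow $(x,\xi) \mapsto (x+2t\xi,\xi)$, while $\norm{u_k}_{L^2(\Upsilon)} \to 0$ forces $\mu(\pi^{-1}(\Upsilon)) = 0$. This is exactly the configuration of the untwisted torus observability, and the argument of Lemma~2.4 of \cite{BuZw:12} gives $\mu \equiv 0$, contradicting its total mass $1$.
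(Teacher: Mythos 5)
Your bounded-$\lambda_k$ step is fine (modulo the harmless case $\lambda_k\to-\infty$, where positivity of $H_{\alpha_k}-\lambda_k$ makes the estimate trivial), but the high-frequency step contains a genuine gap, and it is located exactly where the whole difficulty of torus observability lives. The implication you invoke --- that a probability measure $\mu$ on $S^*\torus^n$ which is invariant under the geodesic flow and satisfies $\mu(\pi^{-1}(\Upsilon))=0$ must vanish --- is false. Since no geometric control is assumed, $\Upsilon$ may be a small ball, and then there are periodic geodesics (rational directions) that never meet $\Upsilon$; the invariant probability measure carried by such a geodesic is flow-invariant, has total mass $1$, and gives no mass to $\pi^{-1}(\Upsilon)$. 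This is precisely why observability on the torus without geometric control cannot be proved by a first semiclassical defect measure alone: the proofs of Jaffard, Komornik, Burq--Zworski and Anantharaman--Maci\`a all require either lacunary Fourier series arguments or second-microlocal measures along resonant directions, i.e.\ information about the sequence $u_k$ that is strictly finer than $\mu$. Passing to $\mu$ and then ``applying the argument of Lemma~2.4 of \cite{BuZw:12}'' is therefore not a reduction to the known result: that lemma is a statement about quasimodes, not about flow-invariant measures, and once you have discarded everything but $\mu$ you can no longer feed the sequence into it. Worse, if you instead try to run the second-microlocal machinery on $u_k$ directly, the twist $-2h_k\alpha_k\cdot(h_kD)+h_k^2\smallabs{\alpha_k}^2$ is an $O(h_k)$ perturbation of the symbol, which is exactly the scale at which second microlocalization along rational directions operates; calling it ``subprincipal, hence harmless'' is the same reasoning the introduction warns breaks down when one tries to absorb lower-order terms into the operator on the torus. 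So the high-frequency case is not closed.

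For comparison, the paper avoids all of this by never touching the high-frequency asymptotics of $H_\alpha$ directly: it converts the resolvent estimate into Schr\"odinger observability via Miller's theorem, factors the twisted propagator exactly as $U_\alpha(t)=e^{it\smallabs{\alpha}^2}\tau_{-2t\alpha}U_0(t)$ (a modulated translation of the untwisted flow), and then applies the known $\alpha=0$ observability on a slightly shrunken set $\omega$ over a short time $T$ so that the translates $\tau_{2t\alpha}(\omega)$ stay inside $\Upsilon$. That exact conjugation is what makes the constants uniform in $\alpha$ and $\lambda$ for free. If you want to salvage a contradiction argument, you would need to replace your defect-measure step by this kind of exact intertwining (or redo the full second-microlocal analysis with the twist), not by a symbol-convergence argument.
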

\begin{proof}
With $\alpha=0$ the result is known, from the estimates of
Jaffard \cite{Ja:90} in dimension $n=2$
and Komornik \cite{Ko:92} in higher dimension.  We will use these
results to generalize to variable $\alpha$.

We recall that one approach
to proving estimates of the form \eqref{resolventobservability} is based on
an {observability estimate} for the the Schr\"odinger propagator (see Theorem~4 of
\cite{AnMa:14}): we say that \emph{Schr\"odinger observability} holds for
$H_\alpha$ if
for every open, nonempty $\omega \subset \torus^n$ and every $T>0$
there exists $C=C(T,\omega)$ such that
\begin{equation}\label{sch.obs}
\norm{f}^2 \leq C \int_0^T \norm{e^{it H_\alpha} f}_{L^2(\omega)}^2 \, dt.
\end{equation}
That Schr\"odinger observability \eqref{sch.obs} for $H_\alpha$ is equivalent to the
resolvent estimate \eqref{resolventobservability} for $H_\alpha$
follows from Theorem 5.1 of Miller~\cite{Mi:05}.

Thus we will prove the proposition by proving Schr\"odinger observability for
any $\alpha \in \RR^n.$ Given an open $\Upsilon\subset \torus^n,$ fix a
nonempty open $\omega\subset
\torus^n$ such that $\overline{\omega} \subset \Upsilon,$ hence there
exists $T>0$ such that $x \in \omega$ and $d(x,y)<4\sqrt{n} T$ implies $y \in
\Upsilon.$ (Here $d(x,y)$ denotes the distance function between points on $\torus^n.$)

Now we note that the propagators $$U_\alpha(t) \equiv e^{it H_\alpha}$$ all
commute with one another, and indeed we may factor
\begin{align}
U_\alpha(t) &=U_0(t) \exp(-2it \alpha\cdot D
+it\smallabs{\alpha}^2)\\ &=e^{it\smallabs{\alpha}^2} \tau_{-2 t\alpha} U_0(t),
\end{align}
where for $\theta \in \RR^n,$ $\tau_\theta$ denotes the translation
operator $\tau_\theta f(x) =f(x+\theta).$
Thus, by $H_0$-observability and the choice of $T \ll 1$ so that
$\tau_{2 t \alpha}(\omega) \subset \Upsilon$ for $t \in [0,T],$ we obtain
\begin{align}
\norm{f}^2 &\leq C \int_0^T \norm{U_0(t) f}_{L^2(\omega)}^2 \,
dt\\
&\leq C \int_0^T \norm{ e^{it \smallabs{\alpha}^2} \tau_{-2\alpha t} U_0(t) f}_{L^2(\tau_{2 \alpha t}(\omega))}^2 \,
dt\\
&\leq C \int_0^T \norm{ e^{it \smallabs{\alpha}^2} \tau_{-2\alpha t} U_0(t) f}_{L^2(\Upsilon)}^2 \,
dt\\
&\leq C \int_0^T \norm{U_\alpha(t) f}_{L^2(\Upsilon)}^2 \,
dt.
\end{align}
As noted above, Theorem~5.1 of
\cite{Mi:05} now shows that this Schr\"odinger observability estimate
implies our resolvent estimate.
\end{proof}

\subsection{Observability estimate}

The proof of Theorem~\ref{theorem:obs} now proceeds as follows.  For
$g\in \smallang{x}^{-s} H^{-\infty}(\RR^n)$ with $s>n/2,$ define its periodization $\Pi
g\in \mathcal{D}' (\torus^n)$ by
$$
\Pi g(x) =\sum_{\ell \in \ZZ^n} g(x+2\pi \ell).
$$
More generally, for $\alpha \in \RR^n$ we set
$$
(\Pi_\alpha g) = \Pi (e^{i \alpha x} g).
$$
Note that this quantity is quasi-$\ZZ^n$-periodic in $\alpha\in \RR^n$: we
have for $k \in \ZZ^n,$
$$
(\Pi_{\alpha+k} g)(x) = e^{ikx} (\Pi_\alpha g)(x).
$$
\begin{lemma}\label{lemma:plancherel}
We have the equality of $L^2$ norms \begin{equation}\label{poisson}
\norm{g}^2_{L^2 (\RR^n)} =\int_{[0,1)^n} \norm{\Pi_\alpha g}^2_{L^2(\torus^n)}\, d\alpha.
\end{equation}
More generally, if $\Omega \subset \RR^n$ is $2\pi \ZZ^n$-invariant and
$\tOmega$ denotes its projection to $\torus^n,$
$$
\norm{g}^2_{L^2 (\Omega)} =\int_{[0,1)^n} \norm{\Pi_\alpha g}^2_{L^2(\tOmega)}\, d\alpha.
$$
\end{lemma}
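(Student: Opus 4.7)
The plan is to prove Lemma~\ref{lemma:plancherel} by a direct computation: unfold $|(\Pi_\alpha g)(x)|^2$ into a double series, then integrate in $\alpha$ and use orthogonality of the characters $\alpha\mapsto e^{2\pi i\alpha\cdot k}$ on $[0,1)^n$ to kill the off-diagonal terms. This is essentially the Plancherel identity for the Zak transform (i.e.\ the fiber decomposition underlying Floquet--Bloch), with the twist $e^{i\alpha\cdot x}$ playing its standard role.

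First I would reduce to $g\in \mathcal{S}(\RR^n)$, so that all series converge absolutely and Fubini applies without comment; the stated identity then extends to arbitrary $g\in L^2(\RR^n)$ (resp.\ $g\in L^2(\Omega)$) by density, since the computation itself shows that $g\mapsto (\alpha\mapsto \Pi_\alpha g)$ extends to an isometry. Starting from
\begin{equation*}
(\Pi_\alpha g)(x) = \sum_{\ell\in \ZZ^n} e^{i\alpha\cdot(x+2\pi\ell)}\,g(x+2\pi\ell),
\end{equation*}
squaring pointwise yields
\begin{equation*}
|(\Pi_\alpha g)(x)|^2 = \sum_{\ell,\ell'\in \ZZ^n} e^{2\pi i \alpha\cdot(\ell-\ell')}\,g(x+2\pi\ell)\,\overline{g(x+2\pi\ell')}.
\end{equation*}
Integrating in $\alpha$ and invoking the orthogonality relation $\int_{[0,1)^n} e^{2\pi i\alpha\cdot k}\,d\alpha = \delta_{k,0}$ for $k\in \ZZ^n$ then leaves only the diagonal terms $\ell=\ell'$, so that
\begin{equation*}
\int_{[0,1)^n} |(\Pi_\alpha g)(x)|^2\,d\alpha = \sum_{\ell\in \ZZ^n} |g(x+2\pi\ell)|^2.
\end{equation*}

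Integrating this pointwise identity in $x$ over the fundamental domain $[0,2\pi)^n$ for $\torus^n$ and unfolding the sum over $\ell$ recovers $\norm{g}^2_{L^2(\RR^n)}$, establishing \eqref{poisson}. For the second statement, the $2\pi\ZZ^n$-invariance of $\Omega$ guarantees that the translates $\tOmega + 2\pi\ell$ tile $\Omega$ up to a null set, so integrating the same pointwise identity over $\tOmega$ in place of $[0,2\pi)^n$ unfolds the series into $\int_\Omega |g|^2\,dx$. There is no substantive obstacle here; the only bookkeeping is the justification of Fubini and of interchanging sum with integral, which is precisely what the reduction to Schwartz functions ensures.
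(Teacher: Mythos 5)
Your argument is correct, but it runs along a different track from the one in the paper. You work entirely in physical space: expand $\abs{(\Pi_\alpha g)(x)}^2$ into the double sum over $\ell,\ell'$, integrate in $\alpha$, and use the orthogonality $\int_{[0,1)^n} e^{2\pi i \alpha\cdot k}\,d\alpha=\delta_{k,0}$ to retain only the diagonal, after which unfolding $\sum_\ell \abs{g(x+2\pi\ell)}^2$ over a fundamental domain (or over $\tOmega$, using the $2\pi\ZZ^n$-invariance of $\Omega$) gives both claims at once. The paper instead works on the Fourier side: a short Fubini computation shows $\widehat{\Pi_\alpha g}(\ell)=\fcal(g)(\ell-\alpha)$, so that Plancherel on $\torus^n$, followed by integration over $\alpha\in[0,1)^n$ and Plancherel on $\RR^n$, yields \eqref{poisson}; the restricted version is then obtained in one line by applying \eqref{poisson} to $\mathbbold{1}_\Omega g$ and noting $\Pi_\alpha(\mathbbold{1}_\Omega g)=\mathbbold{1}_{\tOmega}\,\Pi_\alpha g$. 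The two proofs are dual to one another (yours is the standard Zak-transform Plancherel argument, the paper's identifies the fibers with the Fourier transform sampled on the shifted lattice $\ZZ^n-\alpha$); yours is more elementary and self-contained, while the paper's makes visible the structural fact that the Bloch decomposition diagonalizes into $\fcal(g)$ restricted to translates of the dual lattice, and dispatches the $\Omega$-version with no extra unfolding. Your reduction to Schwartz functions plus density is a sound way to handle the convergence and Fubini issues (the paper instead restricts the definition of $\Pi$ to $g\in\smallang{x}^{-s}H^{-\infty}$, $s>n/2$); just note that for the restricted identity the density argument needs the observation that $g\mapsto\int_{[0,1)^n}\norm{\Pi_\alpha g}^2_{L^2(\tOmega)}\,d\alpha$ is dominated by the full isometric norm, which you have implicitly.
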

\begin{proof}
We use Fubini to compute the Fourier coefficients of the periodic
functions $\Pi_\alpha g$ on $\torus^n:$
\begin{align*}
\widehat{\Pi_\alpha g}(\ell) &= (2\pi)^{-n/2}\int_{[0,2\pi]^n} \sum_{m \in \ZZ^n} g(x+2 \pi m
) e^{i\alpha (x+2\pi m)} e^{-i\ell x} \, dx\\
 &= (2\pi)^{-n/2}\int_{[0,2\pi]^n} \sum_{m \in \ZZ^n} g(x+2 \pi m
) e^{i\alpha (x+2\pi m)} e^{-i\ell (x+ 2 \pi m)} \, dx\\
&= (2\pi)^{-n/2}\int_{\RR^n} g(y) e^{i\alpha y} e^{-i\ell y} \, dy\\
&= \fcal(g)(\ell-\alpha).
\end{align*}
Integrating the sum of squares of the RHS over the unit cube gives $\norm{g}^2_{L^2(\RR^n)}$
by Fubini and Plancherel on $\RR^n,$
while on the LHS, we get
$$
\int_{[0,1)^n} \sum \abs{\widehat{\Pi_\alpha g}(\ell)}^2\, d\alpha =\int_{[0,1)^n}
\norm{{\Pi_\alpha g}}^2_{L^2(\torus^n)}\, d\alpha
$$
by Plancherel on the torus.

The generalization to taking the norm over $\Omega$ is proved simply
by applying \eqref{poisson} to the function $\mathbbold{1}_{\Omega} g.$
\end{proof}

Now we note that
\begin{align*}
H_\alpha-\lambda &\equiv  (D_x-\alpha)^2 -\lambda\\
&=  e^{i\alpha x} (\Lap-\lambda )e^{-i\alpha x}.
\end{align*}
Thus $(\Delta-\lambda)u=f$ yields
$$
(H_\alpha-\lambda) e^{i\alpha x} u=e^{i\alpha x} f\quad \text{on } \RR^n.
$$
Applying $\Pi$ to both sides and using translation-invariance of
$H_\alpha$, we get an equation on the torus:
$$
(H_\alpha-\lambda) (\Pi_\alpha u) =\Pi_\alpha f \quad \text{on } \torus^n.
$$
Applying Proposition~\ref{proposition:observe}, we obtain for
every $\alpha$ in a fundamental domain (and with constants independent of $\alpha$)
$$
\norm{\Pi_\alpha u} ^2\leq C {\norm{\Pi_\alpha f}}^2 + C {\norm{\Pi_\alpha u}^2_{L^2(\tOmega)}},
$$
Now by Lemma~\ref{lemma:plancherel} we may integrate both sides in $\alpha \in [0,1)^n$ to obtain
$$
\norm{u} ^2\leq C {\norm{f}}^2 + C {\norm{u}^2_{L^2(\Omega)}}.
$$
This concludes the proof of Theorem~\ref{theorem:obs}.\qed

\subsection{Resolvent estimate}
We now prove Theorem~\ref{theorem:resolvent}.  We will be brief, as
this is ground well-trodden by other authors.

We start by noting that if we pair the equation \eqref{quasimode} with
$u$ and take the real part, we obtain (using Cauchy-Schwarz) for
$\smallabs{s}\leq s_0 \equiv \sqrt{m}/2$
$$
\norm{u}^2_{H^1(\RR^n)} \leq C \norm{f}^2_{L^2(\RR^n)}.
$$
This proves the estimate near $s=0,$ so we will take
$\smallabs{s}>s_0$ for fixed $s_0$ below.

Again pairing \eqref{quasimode} with $u$ and this time taking the imaginary part yields the
usual estimate
$$
\norm{\sqrt\gamma u}^2 \leq \frac{C}{\smallabs{s}} \norm{f}\norm{u}.
$$
On the other hand, applying
 Theorem~\ref{theorem:obs} to \eqref{quasimode} with the damping term
 on the right-hand side (and $\lambda=s^2-m$) yields
\begin{equation}\label{mainagain}\begin{aligned}
\norm{u} &\leq C \norm{f} + C \smallabs{s} \norm{\gamma u} +C
\norm{u}_{L^2(\Omega)}\\&\leq C \norm{f} + C \smallabs{s} \norm{\gamma u},
\end{aligned}
\end{equation}
where we chose $\Omega$ contained in the set where $\gamma\geq \ep$ a.e.\ for
some $\ep>0$ (and used $s\geq s_0$).  Combining these estimates and observing that $\gamma
\leq  C \sqrt \gamma$ a.e.\  yields for $\smallabs{s}\geq s_0$
$$
\norm{\sqrt\gamma u}^2 \leq \frac{C}{\smallabs{s}} \norm{f}^2+ C  \norm{f}
\norm{\sqrt\gamma u}.
$$
Applying Cauchy-Schwarz we obtain
$$
\norm{\sqrt\gamma u}^2\leq C\norm{f}^2,\quad \smallabs{s}>s_0.
$$
Finally returning to \eqref{mainagain} gives
$$
\norm{u} \leq C \norm{f} + C \smallabs{s} \norm{f}.\qed
$$

\subsection{Proof of energy decay}
In this section, we apply the resolvent estimate,
Theorem~\ref{theorem:resolvent}, to prove our result on energy decay
for the damped Klein--Gordon equation, Theorem~\ref{theorem:decay}.  To
do this we follow the strategy used by Anantharaman--L\'eautaud
\cite{AnLe:2014}, albeit in the much simpler framework of
\cite{BuJo:14}, in which low energy issues are rendered moot by the
positive Klein--Gordon mass.  (We cannot simply quote Proposition~2.4
of \cite{AnLe:2014} verbatim, however, as its hypotheses include a
compact resolvent assumption that fails here.)

The strategy consists of employing the
following theorem of Borichev--Tomilov \cite{BoTo:10} (this is in fact
just one part of Theorem 2.4 of \cite{BoTo:10}):
\begin{thm}[Borichev--Tomilov]
Let $e^{tA}$ be a bounded $\mathcal{C}^0$ semigroup on a Hilbert space with generator $A$ with $\spec(A)
\cap i \RR =\emptyset.$  Then 
$$
\norm{(A-is\Id)^{-1}}=O(\smallabs{s}^\alpha),\ \smallabs{s}\to \infty
\Longleftrightarrow \norm{e^{tA}A^{-1}} =O(t^{-1/\alpha}),\ t \to \infty.
$$
\end{thm}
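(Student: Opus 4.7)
The plan is to establish the two implications separately. The reverse direction follows from a Laplace-transform manipulation, while the forward direction, which is the substantive content, requires a contour deformation argument.

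For the direction $(\Leftarrow)$, assume $\norm{e^{tA}A^{-1}} = O(t^{-1/\alpha})$ and write $R(\lambda) = (\lambda - A)^{-1}$. Since $e^{tA}$ commutes with $A^{-1}$ on $D(A)$, the semigroup law $e^{tA} = (e^{(t/k)A})^k$ gives the improved decay $\norm{e^{tA}A^{-k}} = O(t^{-k/\alpha})$ for each $k\geq 1$. For $k > \alpha$ this makes $e^{tA}A^{-k}$ integrable at infinity, so the Laplace representation $R(\lambda)A^{-k} = \int_0^\infty e^{-\lambda t} e^{tA}A^{-k}\,dt$ (valid for $\mathrm{Re}\,\lambda>0$) extends continuously to the imaginary axis, giving $\norm{R(is)A^{-k}} = O(1)$ uniformly in $s$. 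A Plancherel-based interpolation on the underlying Hilbert space, using the Hille--Yosida bound $\norm{R(\lambda)}\leq M/\mathrm{Re}\,\lambda$ in the right half-plane, then promotes this to the sought resolvent bound $\norm{R(is)} = O(\smallabs{s}^\alpha)$.

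For the substantive direction $(\Rightarrow)$, assume $\norm{R(is)} = O(\smallabs{s}^\alpha)$. I would start from the inverse Laplace representation
$$
e^{tA}A^{-1}x \;=\; \frac{1}{2\pi i}\int_{c-i\infty}^{c+i\infty} e^{t\lambda} R(\lambda)\,\lambda^{-1} x\, d\lambda, \qquad c>0,
$$
whose absolute convergence is ensured by the $\lambda^{-1}$ factor together with the Hille--Yosida bound. Expanding $R(\lambda)$ as a Neumann series around each $R(is)$, convergent when $\smallabs{\lambda - is}\,\norm{R(is)} < 1/2$, extends $R$ analytically to a region of the form
$$
\{\lambda : \mathrm{Re}\,\lambda \geq -\eta(1+\smallabs{\mathrm{Im}\,\lambda})^{-\alpha}\}
$$
with the same polynomial size bound there. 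Deforming the vertical contour onto the boundary of this region, the factor $e^{t\lambda}$ acquires a decay $\exp(-\eta t(1+\smallabs{\mathrm{Im}\,\lambda})^{-\alpha})$; splitting the resulting integral at the threshold $\smallabs{\mathrm{Im}\,\lambda}\sim t^{1/\alpha}$ and estimating each piece produces the claimed rate $\norm{e^{tA}A^{-1}} = O(t^{-1/\alpha})$.

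The main obstacle is in the forward direction: extracting exactly the exponent $t^{-1/\alpha}$ without a spurious logarithmic factor. Earlier work of Batty--Duyckaerts and Pr\"uss incurred such a loss, and the Borichev--Tomilov improvement rests on observing that the $\lambda^{-1}$ weight in the inverse Laplace integral supplies exactly the high-frequency integrability needed to match the polynomial resolvent growth without slack, so that the optimization in the cutoff $\smallabs{\mathrm{Im}\,\lambda}\sim t^{1/\alpha}$ comes out cleanly.
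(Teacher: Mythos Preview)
The paper does not prove this theorem; it is quoted verbatim from Borichev--Tomilov (Theorem~2.4 of \cite{BoTo:10}) and used as a black box in the proof of Theorem~\ref{theorem:decay}, so there is no argument in the paper to compare your sketch against.

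That said, your sketch of the forward direction $(\Rightarrow)$ has a real gap. The contour-deformation argument you describe---analytically continuing $R(\lambda)$ into the region $\{\mathrm{Re}\,\lambda \geq -\eta(1+|\mathrm{Im}\,\lambda|)^{-\alpha}\}$ by Neumann series and estimating the inverse Laplace integral along its boundary---is exactly the Batty--Duyckaerts method, and it yields only $O(t^{-1/\alpha}\log t)$. Your final paragraph asserts that the extra $\lambda^{-1}$ weight removes the logarithm, but it does not: on the deformed contour the integrand has size roughly $(1+|s|)^{\alpha-1}e^{-\eta t(1+|s|)^{-\alpha}}$, which for $\alpha\geq 1$ is not even integrable at infinity, and in any case the optimization over the cutoff still leaves a $\log t$.

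What actually removes the logarithm in \cite{BoTo:10} is the Hilbert-space Plancherel identity, used in the \emph{forward} direction rather than the reverse one where you invoke it. Boundedness of the semigroup together with Plancherel applied to $t\mapsto e^{-at}e^{tA}x$ gives the $L^2$ resolvent bound $\int_{\RR}\|R(a+is)x\|^2\,ds \leq \pi M^2 a^{-1}\|x\|^2$, and likewise for the adjoint semigroup. One writes $\langle e^{tA}A^{-1}x,\, y\rangle$ as a vertical contour integral, applies Cauchy--Schwarz pairing these $L^2$ bounds against the pointwise hypothesis $\|R(is)\|=O(|s|^\alpha)$, and optimizes the abscissa. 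This is precisely where the Hilbert-space hypothesis enters and why the sharp exponent fails in general Banach spaces---the point the paper alludes to in its parenthetical remark about \cite{BaDu:08}.
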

(This represents a slight strengthening of a prior result of
Batty-Duyckaerts \cite{BaDu:08} in Banach spaces.)

We will apply this theorem to the semigroup generated by
$$
A= \begin{pmatrix}
0 & \Id \\
-\Lap-m\Id & -\gamma
\end{pmatrix} 
$$
acting on the energy space $\HH\equiv H^1(\RR^n) \times L^2 (\RR^n).$

The resolvent estimate from Theorem~\ref{theorem:resolvent} implies
the condition on non-imaginary spectrum on $A$ as well as the
resolvent estimate on $A$ as follows: if $\bu=(u_0, u_1)^t$ and
$\bff=(f_0,f_1)^t$ then
$$
(A-is \Id) \bu =\bff
$$
is equivalent to
\begin{equation}\label{system}
\begin{aligned}
(\Lap+m+i s \gamma -s^2) u_0 &=f_1+ (\gamma+is) f_0,\\
u_1 &= f_0 + is u_0,
\end{aligned}
\end{equation}
i.e., if we let $R(is)$ denote the inverse of $(\Lap+m+is \gamma-s^2),$
we have
$$
\bu = (A-is \Id)^{-1} \bff =\begin{pmatrix}
R(is)(\gamma+is) & R(is)\\
\Id + R(is) (is)(\gamma+is) & is R(is)
\end{pmatrix} \bff.
$$
Existence of $R(is)$ on $L^2$ (with norm $O(\smallang{s})$) is Theorem~\ref{theorem:resolvent}, and pairing
\eqref{quasimode} with $u$ as usual and taking real parts easily
establishes that $R(is): L^2 \to H^1$ with norm $O(\smallang{s}^2).$
Thus $(A-is \Id)$ is invertible and we have verified
the spectral condition.  We can further use these methods to estimate
$$
\norm{(A-is \Id)^{-1}}_{\HH\to \HH}=O(\smallang{s}^2);
$$
details of the argument
can be found in, e.g., Lemma~4.6 of \cite{AnLe:2014} (cf.\ also
\cite{MR95i:58175} and \cite{BuHi:07}).  This yields the decay rate
$\smallang{t}^{-1/2}$ for the damped Klein Gordon equation by the
theorem of Borichev--Tomilov.\qed

\bibliographystyle{plain}
\bibliography{all}

\end{document}